\documentclass[12pt]{article}
\usepackage{amssymb,latexsym,amsmath}
\usepackage{amsthm}
\usepackage{geometry}
\usepackage{graphicx,color}
\usepackage{lineno}
\newtheorem{theorem}{Theorem}

\newtheorem{lemma}[theorem]{Lemma}

\theoremstyle{plain}

\theoremstyle{definition}
\newtheorem{remark}[theorem]{Remark}
\newtheorem{example}[theorem]{Example}
\newtheorem{definition}{Definition}[section]

\numberwithin {equation}{section}

\usepackage{float}
\restylefloat{table}

\begin{document}

\title{Interception Conditions in Multi-Agent Pursuit Games Governed by Linear Dynamics with Gronwall-type Constraints}
\author{David Terna. Gbande $^{a}$\thanks{princedavison4@gmail.com}\and
Abbas Ja'afaru Badakaya$^{b}$\thanks{ajbadakaya.mth@buk.edu.ng} \and
Mehdi Salimi$^{c}$\thanks{mehdi.salimi@kpu.ca (Corresponding author)}\and
Aminu Sulaiman Halliru $^{d}$\thanks{aminuhalliru1@gmail.com}\and
Jamilu Adamu $^{e}$
\thanks{jamiluadamu88@gmail.com } 
 } \maketitle
\begin{center}
$^{a}$ Department of Statistics, Federal Polytechnic Wannune, Benue State, Nigeria\\
$^{b,d}$ Department of Mathematical Sciences, Bayero University, Kano, Nigeria \\
$^{c}$ Mathematics Department, Kwantlen Polytechnic University, BC, Canada. \\
$^{e}$ Mathematics Department, Federal University, Gashua, yobe State, Nigeria.
\end{center}
\maketitle
 
\begin{abstract}
\noindent This paper investigates a multi-agent pursuit problem in which a group of pursuers seeks to intercept one or more evaders within a fixed operational time horizon. The motion of both pursuers and evaders is described by first-order linear differential equations, and the control inputs available to the agents are subject to Gronwall-type constraints. Pursuit is considered successful when at least one pursuer reaches the position of each evader at a finite time within the prescribed duration. Within this framework, we construct permissible strategies for the pursuers and derive sufficient conditions under which interception is guaranteed despite the evaders' counteractions. To support the theoretical results, we present numerical examples that demonstrate the effectiveness of the proposed strategies and confirm the analytical conditions for successful pursuit.
 \end{abstract}
\maketitle
\section{Introduction}
\noindent Differential games provide a rigorous mathematical framework for analyzing strategic interactions among multiple decision makers whose actions jointly determine the evolution of a dynamical system over continuous time. The dynamics are described by differential equations, while each player selects admissible control strategies to optimize an individual objective that may conflict with those of other players. As a natural extension of optimal control theory to multi-agent systems, differential games have become an indispensable tool for studying competitive and cooperative decision-making in engineering, economics, robotics, and military operations. Since the pioneering works of Isaacs \cite{REFM6} and Krasovskii \cite{REFM7}, the theory has developed into a mature research area with pursuit-evasion differential games serving as one of its central applications. These foundational studies introduced fundamental concepts such as admissible strategies, value functions, and optimality principles, which continue to underpin modern developments in differential game theory.

\noindent Pursuit-evasion games constitute one of the most extensively studied classes of differential games. In these games, one or more pursuers attempt to capture, intercept, or continuously track one or more evaders, whose objective is to avoid capture for as long as possible \cite{REF30a}. Owing to their strong mathematical foundations and broad practical relevance, pursuit-evasion models have found applications in robotics, cybersecurity, autonomous systems, biological processes, and artificial intelligence; see, for example, \cite{TY2A,TY69A}. Motivated by increasingly realistic applications, recent research has focused on multi-player formulations involving several pursuers and one or more evaders. For instance, Salimi and Ferrara \cite{REFM9} investigated pursuit strategies for one evader and multiple pursuers, deriving structural properties of optimal controls and sufficient conditions for capture. Ahmed \cite{REFM1} studied pursuit games involving finitely many players evolving in a closed convex subset of $\mathbb{R}^{n}$ under generalized integral constraints, while Badakaya et al. \cite{REFM2} analyzed multi-pursuer games in the Hilbert space $l_{2}$, establishing sufficient conditions for $l$-capture and constructing piecewise pursuit strategies. Infinite-dimensional formulations have also attracted considerable attention. Umar et al. \cite{REFM12} investigated pursuit problems governed by infinite systems of binary differential equations in $l_{2}$ under geometric constraints, deriving guaranteed capture-time estimates, whereas Ibragimov et al. \cite{REFM5} considered pursuit games with multiple pursuers and one evader under special classes of integral constraints.

\noindent An important direction in the literature concerns the admissible constraints imposed on the players' control functions. One of the earliest contributions in this direction is due to Berkovitz \cite{REFM3}, who generalized classical pursuit problems by introducing integral constraints on the controls. This formulation significantly broadened the class of admissible strategies and provided a realistic representation of systems with limited energy or fuel resources. Subsequently, Borodo et al. \cite{REFM2B} studied a pursuit differential game in the sequence space $l_{\infty}$ with finitely many pursuers and one evader, where all controls satisfy component-wise integral constraints, and established sufficient conditions guaranteeing capture. Besides integral constraints, geometric constraints have been widely adopted to model bounded instantaneous control capabilities. More recently, Gronwall-type constraints have emerged as a flexible and powerful framework for modeling systems in which the admissible controls depend on accumulated effects or feedback dependent bounds. These constraints have been investigated in both finite and infinite dimensional settings, revealing analytical properties that differ substantially from those associated with classical geometric and integral constraints. In particular, Rilwan and co-authors \cite{REFM8} established the existence of admissible strategies, sufficient conditions for guaranteed pursuit, and characterizations of game values under Gronwall-type constraints, while emphasizing the additional analytical challenges involved in constructing coordinated pursuit strategies. More recently, Gbande et al. \cite{REFM4} considered pursuit games involving many players under combined geometric and Gronwall-type constraints and obtained sufficient conditions for successful pursuit.

\noindent Another important line of research concerns pursuit games whose dynamics are governed by first-order linear differential equations. Such dynamics provide a more realistic representation of systems involving damping, decay, or linear feedback effects than the simple motion model commonly used in the literature. In this direction, Usman et al. \cite{REFM11} investigated a pursuit differential game with first-order linear dynamics in which the players' controls satisfy both geometric and integral constraints. They established sufficient conditions guaranteeing successful pursuit in each considered scenario, thereby extending the classical theory to a broader class of dynamical systems.

\noindent Although substantial progress has been made in the study of pursuit differential games, the combined treatment of multi-player systems governed by first-order linear differential dynamics under Gronwall-type control constraints remains largely unexplored. Existing studies have typically focused either on first-order linear dynamics with geometric and integral constraints or on Gronwall-type constraints for different classes of dynamics. Consequently, explicit sufficient conditions for guaranteed pursuit in multi-player first-order linear systems subject to Gronwall-type constraints are still lacking.

\noindent The present paper addresses this gap by investigating a pursuit differential game involving multiple pursuers and multiple evaders whose motions are governed by first-order linear differential equations under Gronwall-type control constraints. We develop admissible pursuit strategies and establish explicit sufficient conditions guaranteeing successful pursuit. In this way, the paper extends previous results on pursuit-evasion differential games, unifies aspects of the existing literature on first-order linear dynamics and Gronwall-type constraints, and contributes new theoretical results to the analysis of multi-player differential games.

\section{Problem Statement}
\noindent This Research work extends the theory of constrained differential games beyond the classical framework of simple motion player dynamics, motivated by the pursuit-evasion frameworks of Rilwan et al. \cite{REFM8} and Ahmed et al. \cite{REFM1} which studied fixed-duration and optimal-time pursuit games under Gronwall-type control constraints using simple linear dynamics, this research addresses the problem of analyzing pursuit differential games involving multiple pursuers and many evaders governed by modified first-order linear differential equations, with the aim of establishing conditions for capture under Gronwall-type constrained controls.
\noindent We consider a differential game problem in  $\mathbb{R}^n$ with the pursuers $p_{i}$ and 
evaders $e_{j}$ move in accordance with the following differential equations:
\begin{equation}\label{AT31}
\left\{ \begin{array}{ll}
\dot{p}_{i}(t) +\beta_{i} p_{i}(t)  = c_{i}(t), \ \ \ p_{i}(0) = p_{i0},\ \ \ i= 1,2,3,...,l,  \\
  \dot{e}_{j}(t) +\beta_{j} e_{j}(t) = f_{j}(t), \ \ \ e_{j}(0) = e_{j0},\ \ \ j= 1,2,3,...,m,
\end{array}\right
.
\end{equation}
where $  c_{i}(\cdot) , f_{j}(\cdot) $ are the control functions of the pursuers and evaders respectively,
 $\beta_{i}$ and $\beta_{j},$ are  non-zero constants and that $ l$ and $ m$ are natural numbers such that $ l > m $ .
 \begin{definition}
 A measurable function $c_{i}(t) = (c_{i1}(t) ,c_{i2}(t), \dots,  c_{in}(t))  $  that satisfy the inequality
 \begin{equation}\label{DAV19}
||c_{i}(t)||^{2} \leq \varrho_{i}^{2} + 2k \int_{0}^{t}||c_{i}(s)||^{2}ds  \ \ \ t \geq 0\ \ \ i= 1,2,3,...,l
\end{equation}
where $\varrho_{i}$ and $ k $ are positive numbers, is called permissible control of the  pursuers $p_{i}$ with respect to Gronwall-type constraint.
\end{definition}
\begin{definition}
A measurable function $f_{j}(t) = (f_{j1}(t) ,f_{j2}(t), \dots,  f_{jn}(t))  $  that satisfy the inequality
 \begin{equation}\label{DAV22}
 ||f_{j}(t)||^{2} \leq \varsigma_{j}^{2} + 2k \int_{0}^{t}||f_{j}(s)||^{2}ds  \ \ \ t \geq 0 \ \ \ j= 1,2,3,...,m
\end{equation}
where $\varsigma_{j} $ and $ k $ are positive numbers, is called permissible control of the evader $ e_{j} $ with respect to Gronwall-type constraint.
\end{definition}

\begin{definition}
 A function $C_{i}(\cdot)$ is called strategy of the  pursuers, if for any permissible control  of the evaders the system
  \begin{equation}\label{AMIMM}
\left\{ \begin{array}{ll}
 \dot{p}_{i}(t) + \beta_{i} p_{i}(t)= C_{i}(t), \ \ \ p_{i}(0) = p_{i0}, i= 1,2,3,...,l\\
  \dot{e}_{j}(t)+\beta_{j} e_{j}(t) = \ f_{j}(t), \ \ \ e_{j}(0) = e_{j0}, j= 1,2,3,...m,
\end{array}\right.
\end{equation}
has a unique solution ($p_{i}(t), e_{j}(t) $).
\end{definition}
\begin{definition}
	Pursuit is said to be completed  at some finite time $0 < \tau \leq \vartheta $ in the game (\ref{AT31})- (\ref{DAV22}) if there exist strategies $c_{i}(t)$ of the pursuers $p_{i}$ such that for any permissible control $f_{j}(\cdot)$ of the evader $e_{j}$ the equation $p_{i}(\tau) = e_{j}(\tau)$ is satisfied for some $ i \in \left\lbrace 1,2,\cdots l \right\rbrace $ and all $ j \in \left\lbrace 1,2,3,\cdots m \right\rbrace $
\end{definition}

Given the players permissible control $ c_{i}(\cdot)$ and $ f_{j}(\cdot),$ the corresponding 
paths $ p_{i}(t) ,e_{j}(t),$ at any time $ t > 0 $ of the players for any initial positions $ p_{i0}, e_{j0} $, respectively are given by
\begin{align}\label{C21}
	p_{i}(\vartheta) = p_{i0}e^{-\beta_{i}\vartheta} + e^{-\beta_{i}\vartheta} \int_{0}^{\vartheta}c_{i}(s)e^{\beta_{i}s} ds ,
\end{align}
\begin{align}\label{C4}
	e_{j}(\vartheta) = e_{j0}e^{-\beta_{j} \vartheta} + e^{-\beta_{j}\vartheta} \int_{0}^{t}f_{j}(s)e^{\beta_{j}s} ds.
\end{align}
\begin{lemma}\label{TB}\cite{REFM8}
Let   $ w(t),  t \geq 0$ be a measurable function,$ \varphi $ and $ k $ be non-negative real numbers  then the inequality
\begin{equation}
 \displaystyle|w(t)|\leq \varphi e^{kt} ,
\end{equation}
is true whenever
\begin{equation}
|w(t)|^2\leq \varphi^2 + 2k\int_{0}^{t}|w(s)|^2ds .
\end{equation}
\end{lemma}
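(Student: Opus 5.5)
The plan is the classical Gronwall argument applied to the integral term. Define
\[
Z(t)=\int_{0}^{t}|w(s)|^2\,ds ,
\]
which is nondecreasing and absolutely continuous. Then $\dot Z(t)=|w(t)|^2$ for almost every $t$; implicitly we assume $|w|^2$ is locally integrable so that the hypothesis makes sense. The hypothesis then reads $\dot Z(t)\le \varphi^2+2kZ(t)$ a.e., with $Z(0)=0$. From this differential inequality we first bound $Z$, and then substitute back into the hypothesis to bound $|w(t)|^2$ itself.

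\textbf{Step 1: bound $Z$.} Multiply by the integrating factor $e^{-2kt}$ to get
\[
\frac{d}{dt}\bigl(e^{-2kt}Z(t)\bigr)\le \varphi^2 e^{-2kt}
\]
almost everywhere. Since $e^{-2kt}Z(t)$ is absolutely continuous, we may integrate from $0$ to $t$:
\[
e^{-2kt}Z(t)\le \varphi^2\,\frac{1-e^{-2kt}}{2k},
\qquad\text{i.e.}\qquad
2kZ(t)\le \varphi^2\bigl(e^{2kt}-1\bigr)
\]
for $k>0$. In the case $k=0$ the bound on $|w|$ is immediate from the hypothesis, so this case can be set aside.

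\textbf{Step 2: bound $|w(t)|$.} Insert the Step 1 estimate into the original inequality:
\[
|w(t)|^2\le \varphi^2+\varphi^2\bigl(e^{2kt}-1\bigr)=\varphi^2e^{2kt}.
\]
Taking square roots, using $\varphi\ge 0$, gives $|w(t)|\le\varphi e^{kt}$. Note that this holds for every $t$ at which the hypothesis holds, not merely almost every $t$, because Step 1 bounds $Z(t)$ for all $t$ by continuity.

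\textbf{Main obstacle.} The only delicate point is justifying Step 1 for a merely measurable $w$. It needs the absolute continuity of $Z$ and the fact that the product rule and the fundamental theorem of calculus hold for absolutely continuous functions. If one prefers to avoid differentiating, there is an alternative route: iterate the integral inequality $n$ times. This gives
\[
|w(t)|^2\le\varphi^2\sum_{r=0}^{n}\frac{(2kt)^r}{r!}+R_n(t),
\]
where $R_n(t)\le \frac{(2k)^{n+1}}{n!}\,t^n\!\int_0^t|w|^2\to0$ as $n\to\infty$. Letting $n\to\infty$ yields the same bound $\varphi^2e^{2kt}$. I would present the integrating-factor argument as the main proof and mention the iteration as a check.
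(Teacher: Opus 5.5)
The paper gives no proof of this lemma. It is quoted from \cite{REFM8} and used only to get bounds such as $\|f_j(t)\|\le\varsigma_j e^{kt}$ in the theorems, so there is no in-paper argument to compare yours against. Your integrating-factor proof is the standard Gronwall--Bellman argument and it is correct. In particular, the following steps all work:
\begin{itemize}
\item setting aside the case $k=0$;
\item using absolute continuity to turn an a.e.\ differential inequality into a bound on $Z(t)$ for every $t$, and hence on $|w(t)|$ wherever the hypothesis holds;
\item the Picard-iteration check, whose remainder is exactly $(2k)^{n+1}\int_0^t \frac{(t-s)^n}{n!}|w(s)|^2\,ds\le \frac{(2k)^{n+1}t^n}{n!}\int_0^t|w|^2\,ds\to 0$.
\end{itemize}

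You should state your local-integrability assumption as an explicit hypothesis, not as a side remark ``so that the hypothesis makes sense''. If the right-hand side is allowed to be $+\infty$, the lemma as stated is false. Take $\varphi=k=1$, $w=0$ on $[0,1]$, and $w(t)=(t-1)^{-1}$ for $t>1$. For $t\le 1$ the hypothesis reads $0\le 1$. For $t>1$ we have $\int_0^t|w|^2\,ds=+\infty$, so the hypothesis holds trivially for every $t\ge 0$. Yet $|w(1.1)|=10>e^{1.1}$.

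So the lemma must be read with $\int_0^t|w|^2\,ds<\infty$ for all $t$. The same implicit requirement is needed in the paper's definitions of permissible controls for the pursuers and evaders. Your proof covers exactly this case.
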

\textbf{Research question}:\\
 What is (are) the sufficient condition(s) that can ensure completion of pursuit for a finite time in the game (\ref{AT31})- (\ref{DAV22})?
\section{Main Result}
In this section, we present the main results of the research work in form of theorems and their proofs.

Let the $i^{th} $ pursuer chases $j^{th} $ evader and uses the strategy
 \begin{equation}\label{PSS}
c_{i}(t) =\dfrac{(e^{\Lambda_{ij} \vartheta}e_{j0} -p_{i0})}{\vartheta }e^{-\beta_{i} t} + e^{\Lambda_{ij}( \vartheta - t)}f_{j}(t),
 \end{equation}
where $ \Lambda_{ij} = \beta_{i}-\beta_{j} ,\ \ \ 0\leq t\leq \vartheta  .$\\
Before presenting the main results and theorems, we introduce several preliminary definitions that will be used: $\varsigma: = \max \{ \varsigma_{j}\}$. $\varrho: = \min \{ \varrho_{i}\} . $ \ \ \ $\gamma: = \displaystyle\max_{i,j} \{\frac{\|e^{\Lambda_{i,j}}e_{j0} -p_{i0}\|}{\vartheta}\}.$\ \ \ $ \Lambda: = \displaystyle\max_{i,j}\{\Lambda_{i,j} \}. $ \ \ \ $ \beta = \displaystyle\max_{i}\{\beta \} ,$ \ \ \ $\phi = \displaystyle\max \{ -\beta , k \} .$ \\
 The subsequent theorems establish rigorous answers to the research question formulated in this work.
 \begin{theorem}\label{D1} 
Consider the game defined by (\ref{AT31})- (\ref{DAV22}) under the assumptions that 
 $\Lambda =  0$ and $ \varrho > \gamma $.
If the $i^{th}$ pursuer adopts the strategy given in (\ref{PSS}), then pursuit is guaranteed whenever \\
$$\varsigma \leq\left\{ \begin{array}{ll}
 e^{-k\vartheta}\left(\varrho -\gamma\right) , \ \ \  \beta \geq 0.   \\
 \varrho e^{-\phi\vartheta} -\gamma ,    \ \ \    \beta < 0 .\end{array}\right.
$$
\end{theorem}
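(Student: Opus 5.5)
The proof has two parts. First, the strategy (\ref{PSS}) brings the $i$th pursuer to the $j$th evader exactly at $t=\vartheta$. Second, (\ref{PSS}) is a permissible control, i.e. it satisfies (\ref{DAV19}). Since $l>m$, we first fix an assignment giving each evader $e_j$ its own pursuer $p_i$, say $p_j$ chases $e_j$ for $j=1,\dots,m$; the remaining pursuers may use $c_i\equiv 0$, which trivially satisfies (\ref{DAV19}). The hypothesis $\Lambda=0$ means $\Lambda_{ij}=0$ for the chosen pairs, so $\beta_i=\beta_j$, $e^{\Lambda_{ij}\vartheta}=1$ and $e^{\Lambda_{ij}(\vartheta-t)}=1$. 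Hence (\ref{PSS}) reduces to $c_i(t)=\frac{e_{j0}-p_{i0}}{\vartheta}e^{-\beta_i t}+f_j(t)$. This is measurable, being a sum of a continuous function and the evader's measurable control. The system (\ref{AMIMM}) is then linear with an integrable right-hand side, so it has a unique solution and $C_i$ is a strategy.

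\textbf{Capture.} Substitute the reduced $c_i$ into (\ref{C21}) at time $\vartheta$. In $\int_0^\vartheta c_i(s)e^{\beta_i s}ds$ the first term contributes exactly $e_{j0}-p_{i0}$, because $e^{-\beta_i s}e^{\beta_i s}=1$ and the integral of the constant over $[0,\vartheta]$ cancels the factor $1/\vartheta$. The second term contributes $\int_0^\vartheta f_j(s)e^{\beta_j s}ds$, using $\beta_i=\beta_j$. Therefore
\[
p_i(\vartheta)=e^{-\beta_j\vartheta}\Big(e_{j0}+\int_0^\vartheta f_j(s)e^{\beta_j s}ds\Big)=e_j(\vartheta)
\]
by (\ref{C4}), for every permissible $f_j$.

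\textbf{Admissibility.} Rather than verifying (\ref{DAV19}) through a delicate integral comparison, I would prove the stronger pointwise bound $\|c_i(t)\|\le\varrho$ for $0\le t\le\vartheta$. This suffices because $\varrho\le\varrho_i$ and the right side of (\ref{DAV19}) is at least $\varrho_i^2$. By Lemma~\ref{TB} applied to $f_j$ under (\ref{DAV22}), $\|f_j(t)\|\le\varsigma_j e^{kt}\le\varsigma e^{kt}$. With $\gamma$ as defined, the triangle inequality gives
\[
\|c_i(t)\|\le \gamma e^{-\beta_i t}+\varsigma e^{kt}.
\]
We then split into two cases.

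\emph{Case $\beta\ge0$.} Read $\beta$ as governing the sign of the relevant $\beta_i$, so that $e^{-\beta_i t}\le1$. Then $\|c_i(t)\|\le\gamma+\varsigma e^{k\vartheta}$, and the hypothesis $\varsigma\le e^{-k\vartheta}(\varrho-\gamma)$ makes this at most $\varrho$.

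\emph{Case $\beta<0$.} Both exponentials are bounded by $e^{\phi\vartheta}$ with $\phi=\max\{-\beta,k\}$. So $\|c_i(t)\|\le(\gamma+\varsigma)e^{\phi\vartheta}$, and the condition $\varsigma\le\varrho e^{-\phi\vartheta}-\gamma$ gives at most $\varrho$.

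The assumption $\varrho>\gamma$ makes the first admissible range of $\varsigma$ nonempty, and it is necessary for the second range to be nonempty.

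\textbf{Main obstacle.} The step needing care is the admissibility argument. The pointwise estimate $|w|\le\varphi e^{kt}$ from Lemma~\ref{TB} does not by itself imply the Gronwall constraint, since the lemma is not an equivalence. That is why I aim for the constant bound $\varrho$, which sits below the minimal value $\varrho_i^2$ of the right-hand side of (\ref{DAV19}) and sidesteps the integral term entirely. A secondary point is the bookkeeping of which $\beta_i$ enters $e^{-\beta_i t}$. In particular, the case $\beta\ge0$ must be interpreted so that $e^{-\beta_i t}\le1$ holds for the pursuers actually used.
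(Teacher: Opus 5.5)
Your proposal is correct and follows the paper's proof essentially step for step. You reduce (\ref{PSS}) using $\beta_i=\beta_j$, verify $p_i(\vartheta)=e_j(\vartheta)$ from the Cauchy formulas, and then establish the pointwise bound $\|c_i(t)\|\le\varrho\le\varrho_i$ via Lemma~\ref{TB} in the two cases $\beta\ge0$ and $\beta<0$; the paper squares and applies Cauchy--Schwarz where you use the triangle inequality, which is the same estimate. Your remark on the bookkeeping of $\beta$ is well taken: the paper's step $e^{-2\beta_i t}\le e^{-2\beta t}$ (and likewise $-\beta_i\le\phi$) only holds if $\beta$ is a lower bound for the relevant $\beta_i$, i.e.\ $\beta=\min_i\beta_i$ rather than the stated maximum, and that is exactly the reading you adopt.
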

\begin{proof}
When $\Lambda = 0$, we have $\beta_{i}  = \beta_{j}$, and consequently the strategy(\ref{PSS}) reduces to
 \begin{equation}\label{PS}
c_{i}(t) =\dfrac{(e_{j0} -p_{i0})}{\vartheta }e^{-\beta_{i} t} + f_{j}(t).
 \end{equation}
If the $i^{th} $ pursuer apply strategy (\ref{PS}) using (\ref{C21}) and (\ref{C4}), gives  $ p_{i}(\vartheta)= e_{j}(\vartheta).$ Indeed,
\begin{align*}
p_{i}(\vartheta)&= p_{i0}e^{-\beta_{i}\vartheta}+ \int_{0}^{\vartheta} e^{\beta_{i}(s-\theta)}c_{i}(s)ds \nonumber \\
&= p_{i0}e^{-\beta_{i}\vartheta}+ \int_{0}^{\vartheta}e^{\beta_{i}(s-\vartheta)}\left(\dfrac{(e_{j0} -p_{i0})}{\vartheta }e^{-\beta_{i} s} + f_{j}(s)\right)ds \nonumber \\
&=p_{i0}e^{-\beta_{i}\vartheta}+\int_{0}^{\vartheta}\dfrac{(e_{j0}-p_{i0})}{\vartheta}e^{-\beta_{j}\vartheta}ds + \int_{0}^{\vartheta} e^{\beta_{i}(s-\vartheta)}f_{j}(s)ds \nonumber \\
&=p_{i0}e^{-\beta_{i}\vartheta}+ \dfrac{(e_{j0}-p_{i0})}{\vartheta}e^{-\beta_{j}\vartheta}\int_{0}^{\vartheta}ds + \int_{0}^{\vartheta} e^{\beta_{i}(s-\vartheta)}f_{j}(s)ds \nonumber \\
&=e_{j0}e^{-\beta_{j}\vartheta}+ \int_{0}^{\vartheta} e^{\beta_{j}(s-\vartheta)}f_{j}(s)ds
=e_{j}(\vartheta).
\end{align*} 
\noindent The permissibility of the pursuer strategy (\ref{PS}) can be established using Lemma \ref{TB}, as follows:\vspace{-.2cm}
\begin{eqnarray}\label{kk1}
\|c_{i}(t)\|^{2}&=&\left\|\dfrac{(e_{j0} -p_{i0})}{\theta }e^{-\beta_{i} t} + f_{j}(t)\right\|^{2}\nonumber \\
&=&\dfrac{e^{-2\beta_{i} t}\|e_{j0} -p_{i0}\|^2}{\vartheta^2}+ 2 \left\langle \dfrac{e^{-\beta_{i} t}(e_{j0} -p_{i0})}{\vartheta } , f_{j}(t)\right\rangle  +\|f_{j}(t)\|^{2}\nonumber \\
&\leq &\dfrac{e^{-2\beta_{i} t}\|e_{j0} -p_{i0}\|^2}{\vartheta^2}+\frac{2e^{-\beta_{i} t}\|e_{j0} -p_{i0}\|\|f_{j}(t)\|}{\vartheta}+\|f_{j}(t)\|^{2}\nonumber \\
&\leq &\dfrac{e^{-2\beta t}\|e_{j0} -p_{i0}\|^2}{\vartheta^2}+\frac{2e^{-\beta t}\|e_{j0} -p_{i0}\|\|f_{j}(t)\|}{\theta} \nonumber \\
&+ & \|f_{j}(t)\|^{2}.
\end{eqnarray}
If $\beta \geq 0$, then from (\ref{kk1}) we have \vspace{-.2cm}
\begin{align*}
\|c_{i}(t)\|^{2} &\leq  \dfrac{\|e_{j0} -p_{i0}\|^2}{\vartheta^2}+\frac{2\|e_{j0} -p_{i0}\|\varsigma_{j} e^{kt}}{\vartheta}+\varsigma_{j}^{2} e^{2kt} \\
&\leq \dfrac{\|e_{j0} -p_{i0}\|^2}{\vartheta^2}+\frac{2\|e_{j0} -p_{i0}\|\varsigma_{j} e^{k\vartheta}}{\vartheta}+\varsigma_{j}^{2} e^{2k\vartheta} \\
&\leq  \dfrac{\|e_{j0} -p_{i0}\|^2}{\vartheta^2}+\frac{2\|e_{j0} -p_{i0}\|\varsigma e^{k\vartheta}}{\vartheta}+\varsigma^{2} e^{2k\vartheta} \\
&\leq \dfrac{\|e_{j0} -p_{i0}\|^2}{\vartheta^2}+\frac{2\|e_{j0} -p_{i0}\| e^{k\vartheta}}{\vartheta} \left(\varrho -\gamma\right)e^{-k\vartheta} +e^{-2k\vartheta}\left(\varrho -\gamma\right)^{2} e^{2k\vartheta} \\
&= \dfrac{\|e_{j0} -p_{i0}\|^2}{\vartheta^2}+\frac{2\|e_{j0} -p_{i0}\| }{\vartheta} \left(\varrho -\gamma\right) +\left(\varrho -\gamma\right)^{2} \nonumber \\
&=\dfrac{\|e_{j0} -p_{i0}\|^2}{\vartheta^ 2}+\frac{2\varrho\|e_{j0} -p_{i0}\| }{\vartheta}-\frac{2\|e_{j0} -p_{i0}\| \gamma}{\vartheta}+\varrho^2 -2\varrho \gamma +\gamma^2 \nonumber \\
&\leq \dfrac{\|e_{j0} -p_{i0}\|^2}{\vartheta^2}+\frac{2\varrho\|e_{j0} -p_{i0}\|}{\vartheta}-\frac{2\|e_{j0} -p_{i0}\|^2}{\vartheta^2}+ \varrho^2 -\frac{2\varrho\|e_{j0} -p_{i0}\|}{\vartheta}+\frac{\|e_{j0} -x_{i0}\|^2}{\theta^2}\nonumber \\
& = \varrho^2 \leq \varrho_{i}^2 + 2k\int_{0}^{\vartheta}\|c_{i}(t)\|^{2}dt .
\end{align*}
If $\beta < 0$, then from (\ref{kk1}) we have
\begin{align*}
\|c_{i}(t)\|^{2}& = \dfrac{e^{2\phi t}\|e_{j0} -p_{i0}\|^2}{\vartheta^2}+\frac{2e^{\phi t}\|e_{j0} -p_{i0}\|\varsigma e^{\phi t}}{\vartheta}+\varsigma^{2} e^{2\phi t}\\
&\leq\dfrac{e^{2\phi \vartheta}\|e_{j0} -p_{i0}\|^2}{\vartheta^2}+\frac{2e^{2\phi \vartheta}\|e_{j0} -p_{i0}\|\varsigma}{\vartheta}+\varsigma^{2} e^{2\phi\vartheta}\\
&\leq\dfrac{e^{2\phi\vartheta}\|e_{j0} -p_{i0}\|^2}{\vartheta^2}+\frac{2e^{2\phi\vartheta}\|e_{j0} -p_{i0}\|}{\vartheta}\left(\varrho e^{-\phi\vartheta} -\gamma\right)+\left(\varrho e^{-\phi \vartheta} -\gamma\right)^{2} e^{2\phi\vartheta}\\
&=\dfrac{e^{2\phi\vartheta}\|e_{j0} -p_{i0}\|^2}{\vartheta^2}+\frac{2e^{\phi\vartheta}\|e_{j0} -p_{i0}\|\varrho}{\vartheta}-\frac{2e^{2\phi\vartheta}\|e_{j0} -p_{i0}\|\gamma}{\vartheta}
\\& +\varrho^2 -2\gamma\varrho e^{\phi \vartheta} +\gamma^2e^{2\phi \vartheta}\\
&\leq\dfrac{e^{2\phi\vartheta}\|e_{j0} -p_{i0}\|^2}{\vartheta^2}+\frac{2e^{\phi\vartheta}\|e_{j0} -p_{i0}\|\varrho}{\vartheta}-\frac{2e^{2\phi\vartheta}\|e_{j0} -p_{i0}\|^2}{\vartheta^2}
\\& +\varrho^2 -\frac{2e^{\phi\vartheta}\|e_{j0} -p_{i0}\|\varrho}{\vartheta}+\dfrac{e^{2\phi\vartheta}\|e_{j0} -p_{i0}\|^2}{\vartheta^2} \\
& = \varrho^2 \leq \varrho^2 + 2k \int_{0}^{\vartheta}\|c_{i}(t)\|^{2}dt .
\end{align*}
Therefore, the proof of Theorem \ref{D1} is complete.
\end{proof}
To validate the proof of Theorem \ref{D1}, a numerical example is presented in the remark below.
\begin{remark} It is possible to simultaneously satisfy the inequalities of the hypothesis of the Theorem \ref{D1}. View the following example below:
\end{remark}
\begin{example}
 For $  \beta \geq 0 $, we consider $l =2 $ and $m = 1$. Then Theorem \ref{D1} inequalities can be fulfilled by selecting
$ \beta_{1} = 3 ,\beta_{2}  = -4 , \varrho_{1}  = 100, \varrho_{2}  = 200, \varsigma_{1}  = \frac{1}{50} $ ,
$ \vartheta = 2 , k = 3 $
and letting
$ e_{10} = ( e^{1}_{10} , e^{2}_{10}, ..., e^{n}_{10})  = (1,0,...,0 ) ,$
$ p_{10} = ( p^{1}_{10} , p^{2}_{10}, ..., p^{n}_{10})  = (2,0,...,0 ) $ \\
$ p_{20} = ( p^{1}_{20} , p^{2}_{20}, ..., p^{n}_{20}) = (3,0,...,0 ) .$ \\
This implies that
$  \beta   = 3 $ and
$ \frac{\left\|(e_{10} -p_{10})\right\|}{\vartheta} = \frac{(-1^2 +0^2 + ... +0^2 )^{\dfrac{1}{2}}}{2}  = \frac{1}{2}$. Thus
$\gamma = \displaystyle\max_{i,j} \{\frac{\|e^{\Lambda_{i,j}}e_{j0} -p_{i0}\|}{\vartheta}\} = \frac{1}{2}.$
Therefore, we can see  that,
$$ \frac{1}{50} \leq e^{-6}\left(100  -0.5\right). $$
That is $$ \frac{1}{50} < 40141.29,$$
which means that
 $$ \varsigma \leq  e^{-k\vartheta}\left(\varrho -\gamma\right)$$
 With assumed values above, the inequality for $ \beta < 0$ can also be satisfied. This means that
$  \beta   = -4. $ and $  \phi =  \displaystyle\max \{ -4 ,3 \} = 3. $ \\
Therefore, we can see that, $$\frac{1}{50} \leq 100 e^{-6} -0.5. $$
That is  $$ \frac{1}{50} < 0.034$$
implies that,
 $$ \varsigma \leq \varrho e^{-\phi\vartheta} -\gamma .$$
\end{example}

\begin{theorem}\label{D3}
Suppose in the game (\ref{AT31})- (\ref{DAV22}) that
 $\Lambda <  0$ and $\gamma < \varrho $.
If the $i^{th}$ pursuer uses the strategy (\ref{PSS})then pursuit can be completed whenever \\
  $$\varsigma \leq\left\{ \begin{array}{ll}
 e^{-k\vartheta}\left(\varrho -\gamma\right) ,\ \ \ \beta \geq 0.  \\
\varrho e^{-\phi\vartheta} -\gamma,\ \ \ \beta <0.   \end{array}\right
.
$$
\end{theorem}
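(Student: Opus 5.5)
The plan follows the proof of Theorem \ref{D1}, in two parts: first show that the strategy (\ref{PSS}) produces $p_{i}(\vartheta)=e_{j}(\vartheta)$, then show that it is permissible with respect to (\ref{DAV19}). The only new feature is the factor $e^{\Lambda_{ij}(\vartheta-t)}$ multiplying $f_{j}(t)$. Since $\Lambda<0$, every $\Lambda_{ij}<0$, so $0<e^{\Lambda_{ij}(\vartheta-t)}\le 1$ for $0\le t\le\vartheta$. The evader term in $\|c_{i}(t)\|$ is therefore no larger than in the case $\Lambda=0$.

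\textbf{Step 1 (interception).} Substitute (\ref{PSS}) into (\ref{C21}). The first term of the strategy gives
$$e^{-\beta_{i}\vartheta}\int_{0}^{\vartheta}\frac{e^{\Lambda_{ij}\vartheta}e_{j0}-p_{i0}}{\vartheta}\,ds=e^{-\beta_{j}\vartheta}e_{j0}-e^{-\beta_{i}\vartheta}p_{i0},$$
which cancels $p_{i0}e^{-\beta_{i}\vartheta}$ and leaves $e_{j0}e^{-\beta_{j}\vartheta}$. For the second term,
$$e^{-\beta_{i}\vartheta}\int_{0}^{\vartheta}e^{\beta_{i}s}e^{(\beta_{i}-\beta_{j})(\vartheta-s)}f_{j}(s)\,ds=e^{-\beta_{j}\vartheta}\int_{0}^{\vartheta}e^{\beta_{j}s}f_{j}(s)\,ds.$$
Adding the two pieces gives exactly (\ref{C4}), so $p_{i}(\vartheta)=e_{j}(\vartheta)$. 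Because $l>m$, each evader can be assigned its own pursuer, and pursuit is completed at $\tau=\vartheta$.

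\textbf{Step 2 (permissibility).} Expand $\|c_{i}(t)\|^{2}$ as in (\ref{kk1}), using Cauchy--Schwarz on the cross term. Bound $e^{\Lambda_{ij}(\vartheta-t)}$ by $1$. Apply Lemma \ref{TB} to $f_{j}$ to get $\|f_{j}(t)\|\le\varsigma_{j}e^{kt}\le\varsigma e^{k\vartheta}$. Write $a=\|e^{\Lambda_{ij}\vartheta}e_{j0}-p_{i0}\|/\vartheta\le\gamma$, interpreting $\gamma$ with $e^{\Lambda_{ij}\vartheta}$ as the strategy requires.

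When $\beta\ge0$, we have $e^{-\beta_{i}t}\le1$, which gives
$$\|c_{i}(t)\|\le a+\varsigma e^{k\vartheta}\le\gamma+(\varrho-\gamma)=\varrho\le\varrho_{i}.$$
When $\beta<0$, use $e^{-\beta_{i}t}\le e^{\phi\vartheta}$ and $e^{kt}\le e^{\phi\vartheta}$. This gives
$$\|c_{i}(t)\|\le e^{\phi\vartheta}(a+\varsigma)\le e^{\phi\vartheta}\bigl(\gamma+\varrho e^{-\phi\vartheta}-\gamma\bigr)=\varrho.$$
In both cases $\|c_{i}(t)\|^{2}\le\varrho_{i}^{2}\le\varrho_{i}^{2}+2k\int_{0}^{t}\|c_{i}(s)\|^{2}ds$, so (\ref{DAV19}) holds. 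I will write both bounds as $\|c_{i}\|\le a+b$ and square at the end, rather than expanding and recombining terms as the paper does.

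\textbf{Main obstacle.} The delicate points are bookkeeping about which $\beta$ appears in which bound.

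First, the bound on $e^{-\beta_{i}t}$ has to hold for every $i$. If $\beta=\max_{i}\beta_{i}\ge0$, some $\beta_{i}$ may still be negative, and then $e^{-\beta_{i}t}\le1$ fails. I would try to read $\beta\ge0$ as meaning that all $\beta_{i}\ge0$, and otherwise replace $\beta$ by $\min_{i}\beta_{i}$ so that $e^{-\beta_{i}t}\le e^{-\beta t}$ for all $i$; the same replacement makes $-\beta\le\phi$ give the needed bound in the case $\beta<0$.

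Second, the definition of $\gamma$ must include the factor $e^{\Lambda_{ij}\vartheta}$, which is what Step 1 requires.

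Beyond these, the new factor $e^{\Lambda_{ij}(\vartheta-t)}\le1$ only helps, so the argument reduces to that of Theorem \ref{D1}.
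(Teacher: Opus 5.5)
Your proposal is correct and follows the paper's proof. The same substitution into (\ref{C21}) gives $p_i(\vartheta)=e_j(\vartheta)$, and permissibility rests on the same estimates: $e^{\Lambda_{ij}(\vartheta-t)}\le 1$, $e^{-\beta_i t}\le 1$ (resp.\ $\le e^{\phi\vartheta}$), and $\|f_j(t)\|\le\varsigma e^{kt}$ from Lemma \ref{TB}; bounding $\|c_i(t)\|\le a+b$ and squaring at the end is just a cleaner form of the paper's expand-and-recombine computation. Your reading of $\gamma$ with $e^{\Lambda_{ij}\vartheta}$ and of $\beta$ as $\min_i\beta_i$ is what the paper's estimates tacitly need, since with $\beta=\max_i\beta_i$ the step $e^{-\beta_i t}\le 1$ (or $\le e^{\phi t}$) can fail for a pursuer with a smaller $\beta_i$.
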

\begin{proof}
If the $i^{th}$ pursuer apply  strategy (\ref{PSS}) then using (\ref{C21}) and (\ref{C4}), we can deduce that  $p_{i}(\vartheta) = e_{j}(\vartheta)$. Indeed,
\begin{align*}
p_{i}(\vartheta)&= p_{i0}e^{-\beta_{i}\vartheta}+ \int_{0}^{\vartheta} e^{\beta_{i}(s-\vartheta)}c_{i}(s)ds \nonumber \\
&= p_{i0}e^{-\beta_{i}\vartheta}+ \int_{0}^{\vartheta}e^{\beta_{i}(s-\vartheta)}\left(\dfrac{(e^{\Lambda_{ij} \vartheta}e_{j0} -p_{i0})}{\vartheta }e^{-\beta_{i} s} + e^{\Lambda_{ij} (\vartheta-s)}f_{j}(s)\right)ds \nonumber \\
&=p_{i0}e^{-\beta_{i}\vartheta}+\int_{0}^{\vartheta}\dfrac{e_{j0}e^{\beta_{i(s- \theta)}}e^{\Lambda_{ij} \vartheta}e^{-\beta_{i} s}}{\vartheta}ds-\int_{0}^{\vartheta}\dfrac{p_{i0}e^{\beta_{i}(s-\vartheta)}e^{-\beta_{i} s}}{\vartheta}ds
\\& + \int_{0}^{\vartheta}e^{\beta_{i}(s-\vartheta)} e^{\Lambda_{ij} (\vartheta-s)}f_{j}(s)ds \nonumber \\
&=p_{i0}e^{-\beta_{i}\vartheta}+\int_{0}^{\vartheta}\dfrac{e_{j0}e^{-\beta_{j}\vartheta}}{\vartheta}ds-\int_{0}^{\vartheta}\dfrac{p_{i0}e^{-\beta_{i}\vartheta}}{\vartheta}+ \int_{0}^{\vartheta}e^{\beta_{j}(s-\vartheta)} f_{j}(s)ds \nonumber \\
&=p_{i0}e^{-\beta_{i}\vartheta}+ e_{j0}e^{-\beta_{j}\vartheta}-p_{i0}e^{-\beta_{i}\vartheta}+ \int_{0}^{\vartheta}e^{\beta_{j}(s-\vartheta)} f_{j}(s)ds \nonumber \\
&=e_{j0}e^{-\beta_{j}\vartheta}+ \int_{0}^{\vartheta}e^{\beta_{j}(s-\vartheta)} f_{j}(s)ds =e_{j}(\vartheta) .
\end{align*}
The permissibility of the pursuer strategy (\ref{PSS}) is followed using lemma (\ref{TB}) as,
\begin{eqnarray}\label{kk25}
\|c_{i}(t)\|^{2}&=&\left\|\dfrac{(e^{\Lambda_{ij}\vartheta}e_{j0} -p_{i0})}{\vartheta }e^{-\beta_{i} t} + e^{\Lambda_{ij}(\vartheta - t)}f_{j}(t)\right\|^{2}  \nonumber \\
&=&\dfrac{e^{-2\beta_{i} t}\|e^{\Lambda_{ij}\vartheta}e_{j0} -p_{i0}\|^2}{\vartheta^2}+ 2 \left\langle \dfrac{(e^{\Lambda_{ij}\vartheta}e_{j0} -p_{i0})e^{-\beta_{i} t}}{\vartheta } , e^{\Lambda_{ij}(\vartheta - t)}f_{j}(t)\right\rangle  \nonumber \\
&+&\|e^{\Lambda_{ij}(\vartheta - t)}f_{j}(t)\|^{2} \nonumber \\
&\leq &\dfrac{e^{-2\beta_{i} t}\|e^{\Lambda_{ij}\vartheta}e_{j0} -p_{i0}\|^2}{\vartheta^2}+\frac{2e^{-\beta_{i} t}\|e^{\Lambda_{r}\vartheta}e_{j0} -p_{i0}\|e^{\Lambda_{ij}(\vartheta - t)}\|f_{j}(t)\|}{\vartheta}\nonumber\\
&+ & e^{2\Lambda_{ij}(\vartheta - t)}\|f_{j}(t)\|^{2} \nonumber \\
&\leq &\dfrac{e^{-2\beta_{i} t}\|e^{\Lambda_{ij}\vartheta}e_{j0} -p_{i0}\|^2}{\vartheta^2}+\frac{2e^{-\beta_{i} t}\|e^{\Lambda_{r}\vartheta}e_{j0} -p_{i0}\|e^{\Lambda(\vartheta - t)}\|f_{j}(t)\|}{\vartheta}\nonumber\\
&+ & e^{2\Lambda(\vartheta - t)}\|f_{j}(t)\|^{2} \nonumber \\
&\leq &  \dfrac{e^{-2\beta_{i} t}\|e^{\Lambda_{ij}\vartheta}e_{j0} -p_{i0}\|^2}{\vartheta^2}+\frac{2e^{-\beta_{i} t}\|e^{\Lambda_{ij}\vartheta}e_{j0} -p_{i0}\|\|f_{j}(t)\|}{\vartheta}  \nonumber\\
&+ & \|f_{j}(t)\|^{2} .
\end{eqnarray}
when $\beta\geq 0$, then from (\ref{kk25}), we have
\begin{align*}
\|c_{i}(t)\|^{2}&\leq \dfrac{\|e^{\Lambda_{ij}\vartheta}y_{j0} -x_{i0}\|^2}{\vartheta^2}+\frac{2\|e^{\Lambda_{ij}\vartheta}e_{j0} -p_{i0}\|\|f_{j}(t)\|}{\vartheta} + \|f_{j}(t)\|^{2}\\
&\leq \dfrac{\|e^{\Lambda_{ij}\vartheta}e_{j0} -p_{i0}\|^2}{\vartheta^2}+\frac{2\|e^{\Lambda_{ij}\vartheta}e_{j0} -p_{i0}\|\varsigma_{j} e^{k\vartheta}}{\vartheta}+\varsigma_{j}^{2} e^{2k\vartheta}\\
&\leq \dfrac{\|e^{\Lambda_{ij}\vartheta}e_{j0} -p_{i0}\|^2}{\vartheta^2}+\frac{2\|e^{\Lambda_{ij}\vartheta}e_{j0} -p_{i0}\|\varsigma e^{k\vartheta}}{\vartheta}+\varsigma^{2} e^{2k\vartheta}\\
&\leq \dfrac{\|e^{\Lambda_{ij}\vartheta}e_{j0} -p_{i0}\|^2}{\vartheta^2}+\frac{2\|e^{\Lambda_{ij}\vartheta}e_{j0} -p_{i0}\| e^{k\vartheta}}{\vartheta} \left(\varrho -\gamma\right)e^{-k\vartheta}
\\& +e^{-2k\vartheta}\left(\varrho -\gamma\right)^{2} e^{2k\vartheta}\\
&= \dfrac{\|e^{\Lambda_{ij}\vartheta}e_{j0} -p_{i0}\|^2}{\vartheta^2}+\frac{2\|e^{\Lambda_{ij}\vartheta}e_{j0} -p_{i0}\| }{\vartheta} \left(\varrho -\gamma\right) +\left(\varrho -\gamma\right)^{2} \\
&=\dfrac{\|e^{\Lambda_{ij}\vartheta}e_{j0} -p_{i0}\|^2}{\vartheta^2}+\frac{2\varrho\|e^{\Lambda_{ij}\vartheta}e_{j0} -p_{i0}\| }{\vartheta}-\frac{2\|e^{\Lambda_{r}\vartheta}e_{j0} -p_{i0}\| \gamma}{\vartheta}
\\&+\varrho^2 -2\varrho \gamma +\gamma^2 \\
&\leq\dfrac{\|e^{\Lambda_{ij}\vartheta}e_{j0} -p_{i0}\|^2}{\vartheta^2}+\frac{2\varrho\|e^{\Lambda_{ij}\vartheta}e_{j0} -p_{i0}\|}{\vartheta}-\frac{2\|e^{\Lambda_{ij}\vartheta}e_{j0} -p_{i0}\|^2}{\vartheta^2}
\\&+ \varrho^2 -\frac{2\varrho\|e^{\Lambda_{ij}\vartheta}e_{j0} -p_{i0}\|}{\vartheta}+\frac{\|e^{\Lambda_{ij}\vartheta}e_{j0} -p_{i0}\|^2}{\vartheta^2}\\
& = \varrho^2
 \leq \varrho_{i}^2 + 2k\int_{0}^{\vartheta}\|c_{i}(t)\|^{2}dt .
\end{align*}
when $\beta < 0$, then from (\ref{kk25}), we have
\begin{align*}
\|c_{i}(t)\|^{2}& = \dfrac{e^{-2\beta_{i} t}\|e^{\Lambda_{ij}\vartheta}e_{j0} -p_{i0}\|^2}{\vartheta^2}+\frac{2e^{-\beta_{i} t}\|e^{\Lambda_{ij}\vartheta}e_{j0} -p_{i0}\|\|f_{j}(t)\|}{\vartheta}\|f_{j}(t)\|^{2}\\
&\leq\dfrac{e^{2\phi t}\|e^{\Lambda_{ij}\vartheta}e_{j0} -p_{i0}\|^2}{\vartheta^2}+\frac{2e^{\phi t}\|e^{\Lambda_{ij}\vartheta}e_{j0} -p_{i0}\|\varsigma e^{\phi t}}{\vartheta}+\varsigma^{2} e^{2\phi t}\\
&\leq\dfrac{e^{2\phi \vartheta}\|e_{j0} -p_{i0}\|^2}{\vartheta^2}+\frac{2e^{2\phi \vartheta}\|e_{j0} -p_{i0}\|\varsigma}{\vartheta}+\varsigma^{2} e^{2\phi\vartheta}\\
&\leq\dfrac{e^{2\phi\vartheta}\|e^{\Lambda_{ij}\vartheta}e_{j0} -p_{i0}\|^2}{\vartheta^2}+\frac{2e^{2\phi\vartheta}\|e^{\Lambda_{ij}\vartheta}e_{j0} -p_{i0}\|}{\vartheta}\left(\varrho e^{-\phi\vartheta} -\gamma\right)+\left(\varrho e^{-\phi \vartheta} -\gamma\right)^{2} e^{2\phi\vartheta}\\
&=\dfrac{e^{2\phi\vartheta}\|e^{\Lambda_{ij}\vartheta}e_{j0} -p_{i0}\|^2}{\vartheta^2}+\frac{2e^{\phi\vartheta}\|e^{\Lambda_{ij}\vartheta}e_{j0} -p_{i0}\|\varrho}{\vartheta}-\frac{2e^{2\phi\vartheta}\|e^{\Lambda_{ij}\vartheta}e^{\Lambda_{ij}\vartheta}e_{j0} -p_{i0}\|\gamma}{\vartheta}
\\&+\varrho^2 -2\gamma\varrho e^{\phi \vartheta} +\gamma^2e^{2\phi \vartheta}\\
&\leq\dfrac{e^{2\phi\vartheta}\|e^{\Lambda_{ij}\vartheta}e_{j0} -p_{i0}\|^2}{\vartheta^2}+\frac{2e^{\phi\vartheta}\|e^{\Lambda_{ij}\vartheta}e_{j0} -p_{i0}\|\varrho}{\vartheta}-\frac{2e^{2\phi\vartheta}\|e^{\Lambda_{ij}\vartheta}e_{j0} -p_{i0}\|^2}{\vartheta^2}
\\&+\varrho^2 -\frac{2e^{\phi\vartheta}\|e^{\Lambda_{ij}\vartheta}e_{j0} -p_{i0}\|\varrho}{\vartheta}+\dfrac{e^{2\phi\vartheta}\|e_{j0} -p_{i0}\|^2}{\vartheta^2}\\
& = \varrho^2
 \leq \varrho_{i}^2 + 2k \int_{0}^{\vartheta}\|c_{i}(t)\|^{2}dt .
\end{align*}
Therefore, the proof of Theorem \ref{D3} is complete.
\end{proof}
To validate the proof of Theorem \ref{D3}, a numerical example is presented in the remark below.
\begin{remark} At the same time, the inequalities of the hypothesis of the Theorem \ref{D3} can be satisfied. Look at the following example:
\end{remark}
\begin{example}
 We take into consideration that $l = 2 $ and $m = 1$ for $\beta \geq 0 $. Then Theorem \ref{D1} inequalities can be fulfilled if
$ \beta_{1} = 3 ,\beta_{2}  = -4 , \varrho_{1}  = 250, \varrho_{2}  = 200, \varsigma_{1}  = \frac{1}{20} $,
$ \vartheta = 1 .$ and $ k = 3 $ 
Letting
$ e_{10} = ( e^{1}_{10} , e^{2}_{10}, ..., e^{n}_{10})  = (1,0,...,0 ) ,$
$ p_{10} = ( p^{1}_{10} , p^{2}_{10}, ..., p^{n}_{10})  = (2,0,...,0 ) $ \\
$ p_{20} = ( p^{1}_{20} , p^{2}_{20}, ..., p^{n}_{20}) = (3,0,...,0 ) .$ \\
This means that
$  \beta   = 3 $, $  \phi =  \displaystyle\max \{ 3 ,3 \} = 3 $ $\Lambda_{2,1} = -4-3 = -7. \ \ \ \Lambda = -7 $\\ and
$ \frac{\left\|(e_{10} -p_{10})\right\|}{\vartheta} = \frac{((-3)^2 +0^2 + ... +0^2 )^{\dfrac{1}{2}}}{1}   = 3.0$, thus
$\gamma = \displaystyle\max_{i,j} \{\frac{\|e^{\Lambda_{i,j}}e_{j0} -p_{i0}\|}{\vartheta}\} = 3.0.$
Consequently, we may observe that,
$$ \frac{1}{20} \leq e^{-3}\left(200  -3.0\right). $$
That is $$ \frac{1}{20} < 0.034,$$
which means that
 $$ \varsigma \leq e^{-k\vartheta}\left(\varrho -\gamma\right) .$$
The inequality for $\beta < 0$ can also be satisfied with the above assumed values, meaning that  $\beta = -4. $ and $  \phi =  \displaystyle\max \{ -4 ,3 \} = 4 $  \\
Therefore we can see that, $$\frac{1}{20} \leq 200 e^{-4} -3.0 .$$
That is $$ \frac{1}{20} < 0.66 .$$
This implies that,
$$ \varsigma \leq \varrho e^{-\phi\vartheta} -\gamma .$$
\end{example}
\begin{theorem}\label{D4}
 Suppose in the game (\ref{AT31})- (\ref{DAV22}) that
 $\Lambda >  0$ and $ \varrho > \gamma.$
If the $i^{th}$ pursuer uses the strategy (\ref{PSS})then pursuit can be completed whenever \\
  $$\varsigma \leq\left\{ \begin{array}{ll}
 e^{-\vartheta(k +\Lambda)}\left(\varrho -\gamma\right) , \beta \geq 0.   \\
 \left(e^{-\phi \vartheta}\varrho - \gamma \right)e^{-\Lambda\vartheta},\beta <0 . \end{array}\right.$$
\end{theorem}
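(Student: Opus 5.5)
The proof will follow the same two-part scheme as Theorems \ref{D1} and \ref{D3}: first show that the strategy (\ref{PSS}) forces $p_{i}(\vartheta)=e_{j}(\vartheta)$, then show that (\ref{PSS}) is permissible, i.e.\ satisfies (\ref{DAV19}). The interception part does not depend on the sign of $\Lambda$. I will substitute (\ref{PSS}) into (\ref{C21}) and use $e^{\beta_{i}(s-\vartheta)}e^{\Lambda_{ij}(\vartheta-s)}=e^{\beta_{j}(s-\vartheta)}$ and $e^{\beta_{i}(s-\vartheta)}e^{-\beta_{i}s}e^{\Lambda_{ij}\vartheta}=e^{-\beta_{j}\vartheta}$. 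The computation is then word for word that of Theorem \ref{D3} and yields $e_{j0}e^{-\beta_{j}\vartheta}+\int_{0}^{\vartheta}e^{\beta_{j}(s-\vartheta)}f_{j}(s)\,ds=e_{j}(\vartheta)$ by (\ref{C4}).

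For permissibility I will expand $\|c_{i}(t)\|^{2}$ as in (\ref{kk25}) and apply Cauchy--Schwarz to the cross term. Write $a=\|e^{\Lambda_{ij}\vartheta}e_{j0}-p_{i0}\|/\vartheta\le\gamma$. The new feature, compared with Theorem \ref{D3}, is that now $\Lambda>0$. Hence we can no longer drop the factor $e^{\Lambda_{ij}(\vartheta-t)}$, and instead bound it by $e^{\Lambda\vartheta}$ for $0\le t\le\vartheta$. By Lemma \ref{TB} applied to (\ref{DAV22}), $\|f_{j}(t)\|\le\varsigma_{j}e^{kt}\le\varsigma e^{k\vartheta}$. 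This gives
\[
\|c_{i}(t)\|\le a\,e^{-\beta_{i}t}+e^{\Lambda\vartheta}\varsigma e^{k\vartheta},
\]
so that $\|c_{i}(t)\|^{2}\le\bigl(a\,e^{-\beta_{i}t}+\varsigma e^{(k+\Lambda)\vartheta}\bigr)^{2}$. Writing it as a perfect square is cleaner than the expanded chain in the earlier proofs.

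The argument then splits into cases. If $\beta\ge0$, then $e^{-\beta_{i}t}\le1$; I intend to read $\beta$ as $\min_i\beta_i$ so that this holds for every $i$. The hypothesis $\varsigma\le e^{-\vartheta(k+\Lambda)}(\varrho-\gamma)$ then gives $\|c_{i}(t)\|\le\gamma+(\varrho-\gamma)=\varrho\le\varrho_{i}$. Consequently $\|c_{i}(t)\|^{2}\le\varrho_{i}^{2}\le\varrho_{i}^{2}+2k\int_{0}^{t}\|c_{i}(s)\|^{2}ds$, which is (\ref{DAV19}). If $\beta<0$, I will use $e^{-\beta_{i}t}\le e^{\phi\vartheta}$ and $e^{k\vartheta}\le e^{\phi\vartheta}$, valid since $\phi=\max\{-\beta,k\}$. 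These give $\|c_{i}(t)\|\le e^{\phi\vartheta}\bigl(\gamma+\varsigma e^{\Lambda\vartheta}\bigr)$. The hypothesis $\varsigma\le(e^{-\phi\vartheta}\varrho-\gamma)e^{-\Lambda\vartheta}$ then yields $\|c_{i}(t)\|\le e^{\phi\vartheta}\cdot e^{-\phi\vartheta}\varrho=\varrho$ only if $\gamma$ is absorbed correctly. Here $e^{\phi\vartheta}\gamma$ appears rather than $\gamma$, so to close the bound I will place the $e^{-\beta_i t}$ factor only on the $a$ term and check the exponents carefully.

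The main obstacle is the $\beta<0$ bookkeeping. With the stated bound, $e^{\phi\vartheta}(\gamma+\varsigma e^{\Lambda\vartheta})\le\varrho$ requires $\varsigma\le(e^{-\phi\vartheta}\varrho-\gamma)e^{-\Lambda\vartheta}$, which is exactly the hypothesis. Indeed $e^{\phi\vartheta}\bigl(\gamma+e^{-\phi\vartheta}\varrho-\gamma\bigr)=\varrho$, so the bound closes once both the $e^{-\beta_i t}$ and $e^{kt}$ factors are bounded by the common $e^{\phi\vartheta}$. I will take this route, factoring $e^{\phi\vartheta}$ out of the whole sum. Permissibility then follows in both cases, and pursuit is completed at $\tau=\vartheta$.
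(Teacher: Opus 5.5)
Your proposal is correct and follows the paper's proof: interception by substituting (\ref{PSS}) into (\ref{C21}) exactly as in Theorem \ref{D3}, and permissibility by bounding $e^{\Lambda_{ij}(\vartheta-t)}\le e^{\Lambda\vartheta}$ and $\|f_{j}(t)\|\le\varsigma e^{k\vartheta}$ via Lemma \ref{TB}, with the $\beta<0$ case closing just as in the paper through $e^{2\phi\vartheta}\bigl(\gamma+(e^{-\phi\vartheta}\varrho-\gamma)\bigr)^{2}=\varrho^{2}$. Your perfect-square form $\bigl(a+\varsigma e^{(k+\Lambda)\vartheta}\bigr)^{2}\le\varrho^{2}$ and your reading of $\beta$ as $\min_i\beta_i$ are sound refinements; they make the bounds $e^{-\beta_i t}\le 1$ and $e^{-\beta_i t}\le e^{\phi\vartheta}$ genuinely valid, which the paper's $\max$ definition of $\beta$ and its term-by-term inequality chain do not quite justify.
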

\begin{proof}
The proof that $ p_{i}(\vartheta)= e_{j}(\vartheta)$ is analogous to that presented in Theorem \ref{D3}.\\
To establish the permissibility of strategy (\ref{PSS}), we proceed as follows:
\begin{eqnarray}\label{kk}
\|c_{i}(t)\|^{2}&=&\left\|\dfrac{(e^{\Lambda_{ij}\vartheta}e_{j0} -p_{i0})}{\vartheta }e^{-\beta_{i} t}
 + e^{\Lambda_{ij}(\vartheta - t)}f_{j}(t)\right\|^{2}  \nonumber \\
&=&\dfrac{e^{-2\beta_{i} t}\|e^{\Lambda_{ij}\vartheta}e_{j0} -p_{i0}\|^2}{\vartheta^2}+ 2 \left\langle \dfrac{e^{-\beta_{i} t}(e^{\Lambda_{ij}\vartheta}e_{j0} -p_{i0})}{\vartheta } , e^{\Lambda_{ij}(\vartheta - t)}f_{j}(t)\right\rangle  \nonumber \\
&+&\|e^{\Lambda_{ij}(\vartheta - t)}f_{j}(t)\|^{2} \nonumber \\
&\leq & \dfrac{e^{-2\beta_{i} t}\|e^{\Lambda_{ij}\vartheta}e_{j0} -p_{i0}\|^2}{\vartheta^2}+\frac{2e^{-\beta_{i} t}\|e^{\Lambda_{ij}\vartheta}e_{j0} -p_{i0}\|e^{\Lambda_{ij}(\vartheta - t)}\|f_{j}(t)\|}{\vartheta}\nonumber\\
&+ & e^{\Lambda_{ij}(\vartheta - t)}\|f_{j}(t)\|^{2} \nonumber \\
&\leq &  \dfrac{e^{-2\beta t}\|e^{\Lambda_{ij}\vartheta}e_{j0} -p_{i0}\|^2}{\vartheta^2}+\frac{2e^{-\beta t}\|e^{\Lambda_{ij}\vartheta}e_{j0} -p_{i0}\|e^{\Lambda\vartheta }\|f_{j}(t)\|}{\vartheta} \nonumber\\
&+ & e^{\Lambda\vartheta }\|f_{j}(t)\|^{2}.
\end{eqnarray}
when $\beta \geq 0$, then from (\ref{kk}), we have
\begin{align*}
\|c_{i}(t)\|^{2}&\leq \dfrac{\|e^{\Lambda_{ij}\vartheta}e_{j0} -p_{i0}\|^2}{\vartheta^2}+\frac{2\|e^{\Lambda_{ij}\vartheta}e_{j0} -p_{i0}\|e^{\Lambda\vartheta }\|f_{j}(t)\|}{\vartheta}+e^{2\Lambda\vartheta }\|f_{j}(t)\|^{2}\\
&\leq \dfrac{\|e^{\Lambda_{ij}\vartheta}e_{j0} -p_{i0}\|^2}{\vartheta^2}+\frac{2\|e^{\Lambda_{i,j}\vartheta}e_{j0} -p_{i0}\|\varsigma_{j} e^{\vartheta(\Lambda+k)}}{\vartheta}+\varsigma_{j}^{2} e^{2\vartheta(\Lambda +k)}
\end{align*}
\begin{align*}
&\leq \dfrac{\|e^{\Lambda_{i,j}\vartheta}e_{j0} -p_{i0}\|^2}{\vartheta^2}+\frac{2\|e^{\Lambda\vartheta}e_{j0} -p_{i0}\|\varsigma e^{\vartheta(\Lambda+k)}}{\vartheta}
+\varsigma^{2} e^{2\vartheta(\Lambda+k)}\\
&\leq \dfrac{\|e^{\Lambda_{ij}\vartheta}e_{j0} -p_{i0}\|^2}{\vartheta^2}+\frac{2\|e^{\Lambda_{ij}\vartheta}e_{j0} -p_{i0}\| e^{\vartheta(\Lambda+k)}}{\vartheta} \left(\varrho -\gamma\right)e^{-\vartheta(\Lambda+k)}
\\&+e^{-2\vartheta(\Lambda +k)}\left(\varrho -\gamma\right)^{2} e^{2\vartheta(\Lambda+k)}\\
&= \dfrac{\|e^{\Lambda_{ij}\vartheta}e_{j0} -p_{i0}\|^2}{\vartheta^2}+\frac{2\|e^{\Lambda_{ij}\vartheta}e_{j0} -p_{i0}\| }{\vartheta} \left(\varrho -\gamma\right) +\left(\varrho -\gamma\right)^{2} \\
&=\dfrac{\|e^{\Lambda_{ij}\vartheta}e_{j0} -p_{i0}\|^2}{\vartheta^2}+\frac{2\varrho\|e^{\Lambda_{ij}\vartheta}e_{j0} -p_{i0}\| }{\vartheta}-\frac{2\|e^{\Lambda_{ij}\vartheta}e_{j0} -p_{i0}\| \gamma}{\vartheta}
\\&+\varrho^2 -2\varrho \gamma +\gamma^2 \\
&\leq\dfrac{\|e^{\Lambda_{ij}\vartheta}e_{j0} -p_{i0}\|^2}{\vartheta^2}+\frac{2\varrho\|e^{\Lambda_{ij}\vartheta}e_{j0} -p_{i0}\|}{\vartheta}-\frac{2\|e^{\Lambda_{ij}\vartheta}e_{j0} -p_{i0}\|^2}{\vartheta^2}
\\&+ \varrho^2 -\frac{2\varrho\|e^{\Lambda_{i,j}\vartheta}e_{j0} -p_{i0}\|}{\vartheta}+\frac{\|e^{\Lambda_{ij}\vartheta}e_{j0} -p_{i0}\|^2}{\vartheta^2}\\
&= \varrho^2
\leq \varrho_{i}^2 + 2k\int_{0}^{\vartheta}\|c_{i}(t)\|^{2}dt .
\end{align*}
when $\beta < 0$, then from (\ref{kk}), we have
\begin{align*}
\|c_{i}(t)\|^{2} & = \dfrac{e^{2\phi \vartheta}\|e^{\Lambda_{ij}\vartheta}e_{j0} -p_{i0}\|^2}{\vartheta^2}+\frac{2e^{2\phi \vartheta}\|e^{\Lambda_{ij}\vartheta}e_{j0} -p_{i0}\|e^{\Lambda\vartheta}\left(e^{-\phi \vartheta}\varrho - \gamma \right)e^{-\Lambda\vartheta}}{\vartheta}
\\&+e^{2\Lambda\vartheta }\left(e^{-\phi \vartheta}\varrho - \gamma \right)^{2}e^{-2\Lambda\vartheta} e^{2\phi \vartheta}\\
&\leq \gamma^{2} e^{2\phi\vartheta}+2\gamma e^{2\phi \vartheta} \left( e^{-\phi \vartheta}\varrho - \gamma \right)+\left( e^{-\phi \vartheta}\varrho - \gamma\right)^{2}e^{2\phi \vartheta}\\
&\leq e^{2\phi\vartheta}\left(\gamma^{2} +2\gamma  \left( e^{-\phi \vartheta}\varrho - \gamma \right)+\left( e^{-\phi \vartheta}\varrho - \gamma\right)^{2} \right)\\
&= e^{2\phi\vartheta}\left(\gamma^{2} +2\gamma   e^{-\phi \vartheta}\varrho  -2 \gamma^{2} + e^{-2\phi \vartheta}\varrho^{2} -2\gamma\varrho e^{-\phi \vartheta} +  \gamma^{2} \right)\\
&= \varrho^2
 \leq \varrho_{i}^2 + 2k \int_{0}^{\vartheta}\|c_{i}(t)\|^{2}dt .
\end{align*}
Therefore, the proof of Theorem \ref{D4} is complete.
\end{proof}
To validate the proof of Theorem \ref{D4}, a numerical example is presented in the remark below.
\begin{remark} The inequality in Theorem \ref{D4} can be satisfied concurrently. Here is an example:
\end{remark}
\begin{example}
 For $  \beta \geq 0 $ ,we consider $l =2 $ and $m = 1$. Then the inequalities in Theorem \ref{D1} can be satisfied by choosing
$ \beta_{1} = 3 ,\beta_{2}  = -4 , \varrho_{1}  = 2050000, \varrho_{2}  = 100000, \varsigma_{1}  = \frac{1}{20} $ ,
$ \vartheta = 1 ,$ $ k = 3 $ \\
and allowing
$ e_{10} = ( e^{1}_{10} , e^{2}_{10}, ..., e^{n}_{10})  = (1,0,...,0 ) ,$
$ p_{10} = ( p^{1}_{10} , p^{2}_{10}, ..., p^{n}_{10})  = (2,0,...,0 ) $ \\
$ p_{20} = ( p^{1}_{20} , p^{2}_{20}, ..., p^{n}_{20}) = (3,0,...,0 ) .$ \\
This means that
$  \beta   = 3 $,  $\Lambda_{1,2} = 3 +4 = 7. \ \ \ \Lambda = 7 $\\ and
$ \frac{\left\|(e_{10} -p_{10})\right\|}{\vartheta} = \frac{((-3)^2 +0^2 + ... +0^2 )^{\dfrac{1}{2}}}{1}  1094$, thus
$\gamma = \displaystyle\max_{i,j} \{\frac{\|e^{\Lambda_{i,j}}e_{j0} -p_{i0}\|}{\vartheta}\} = 1094.$
Therefore we can see that,
$$\frac{1}{20} \leq e^{-1(10)}\left(100000 -1094\right). $$
That is $$ \frac{1}{20} < 1811.53,$$
which means that
$$ \varsigma \leq  e^{-\vartheta(k +\Lambda)}\left(\varrho -\gamma\right)$$
 With assumed values above, the inequality for $ \beta < 0$ can also be satisfied, this means that
$  \beta   = -4. $ and $  \phi =  \displaystyle\max \{ -4 ,3 \} = 3 $  \\
Therefore, we can see that, $$\frac{1}{20} \leq \left(e^{-3 }100000 - 1094 \right)e^{-7} .$$
That is $ \frac{1}{20} < 0.67, $
which means that
 $ \varsigma \leq \left(e^{-\phi \vartheta}\varrho - \gamma \right)e^{-\Lambda\vartheta}.$
\end{example}
 \section{Conclusion}
\noindent We investigated a class of multi-agent pursuit problems over a fixed operational time horizon in the space $\mathbb{R}^n$. The motion of each agent is governed by a first-order linear differential equation.\\
\noindent We established Theorems \ref{D1}, \ref{D3}, and \ref{D4} under different operational conditions characterized by the parameter $\Lambda$ (defined as the maximum of $\beta_i - \beta_j$). These results provide explicit criteria under which successful interception is guaranteed.\\
\noindent In Theorem \ref{D1}, we showed that when $\Lambda = 0$ and the maximum speed of the pursuers exceeds $\gamma$, interception is ensured. In Theorem \ref{D3}, we proved that if $\Lambda < 0$ and $\gamma$ is less than the maximum speed of the pursuers, pursuit is also completed. Finally, Theorem \ref{D4} demonstrates that when $\Lambda > 0$ and the pursuers’ maximum speed exceeds $\gamma$, successful interception still occurs.\\
\noindent To illustrate the practical relevance of the theoretical findings, we presented numerical examples that confirm the analytical conditions for successful pursuit.\\
This work extends and generalizes earlier results by Ahmed \cite{REFM1} and Gbande et al. \cite{REFM4}. The theoretical framework developed here has potential applications in engineering systems, economic competition models, missile guidance, autonomous vehicles, robotics, and related multi-agent control problems.

\section{Declarations}
\textbf{Competing Interest}: The authors declare no conflicts of interest.\\
\textbf{Funding}: This research received no funding.\\
\textbf{Availability of data and material}: No data is associated with this work.\\

\end{document}